\newtheorem{theorem}{Theorem}
\newtheorem{lemma}[theorem]{Lemma}
\theoremstyle{definition}
\newtheorem{definition}[theorem]{Definition}
\theoremstyle{remark}
\numberwithin{equation}{section}
\newcommand{\g}{\Gamma}
\newcommand{\bibtitle}[1]{\textit{#1}}
\begin{document}

\title{Escher squares and lattice links}

\author{
Ramin Naimi,
Andrei Pavelescu,
Elena Pavelescu
}

\address{
Department of Mathematics, Occidental College, Los Angeles, CA 90041, USA. 
}
\address{
Department of Mathematics, University of South Alabama, Mobile, AL  36688, USA.
}

\date{\today}

\subjclass[2000]{Primary 57M25, Secondary 05C10}

\maketitle

\begin{abstract}
We give a shorter and simpler proof of the result of \cite{AB},
which gives a necessary and sufficient condition
for when a lattice diagram is the projection of a lattice link.
\end{abstract}
\vspace{0.1in}

\section{Introduction}
 For $n= 2,3$, let $G_n$ be the infinite graph with vertex set $\mathbb{Z}^n\subset \mathbb{R}^n$ such that for each $v, w \in \mathbb{Z}^n$, $vw$ is an edge of $G_n$ if and only if it has length 1.
A \textit{lattice link} is a link $L \subset G_3$ with one or more components.
Much of the research on lattice links focuses on finding lattice stick numbers, the minimal number of line segments (possibly containing more than one edge) necessary to construct a link type as a lattice link \cite {ACCJSZ, Di, HNO, HO, JP}. 
One of the first results was that of Diao \cite{Di}, who proved that  the lattice stick number of the trefoil is 12.
Adams et al. \cite{ACCJSZ} found lattice stick numbers for various knots and links, including all $(p, p + 1)-$torus knots.
Hong, No and Oh \cite{HNO} found  all links with more than one component whose lattice stick numbers are at most 14. 
Lattice links have played an important role in simulating various circular molecules \cite{TROQ}.
One way to describe a link is through a diagram, a  projection in two dimensions with additional crossing information.
In this article, we characterize those diagrams which represent projections of lattice links. 

A \textit{lattice diagram} is a subgraph $D\subset G_2$ such that every vertex of $D$ has degree 2 or 4, and every vertex of degree 4 is  endowed with over/under crossing information as in Figure \ref{Fig-crossings}.
\begin{figure}[ht]
 \centering
 \includegraphics[width=70mm]{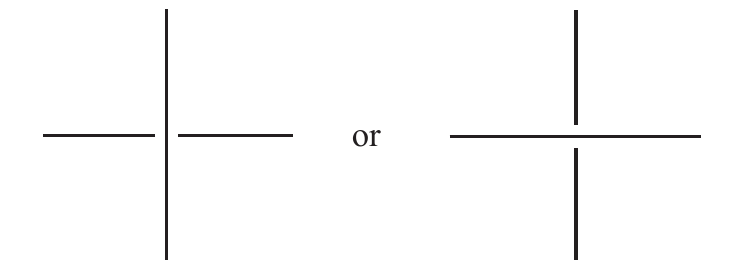}
\caption{Diagram crossings.}
\label{Fig-crossings}
\end{figure}

\begin{definition}
\label{realizable}
Let $\pi : \mathbb{R}^3 \rightarrow \mathbb{R}^2$ be given by $\pi(x,y,z)=(x,y)$. We say a lattice diagram $D$ is \textit{realizable} if there is a lattice link $L$ such that
\begin{itemize}
\item[(1)] $\pi(L)=D$;
\item[(2)] If $p$ belongs to the interior of an edge of $D$, then $\pi^{-1}(p)\cap L$ is a single point;
\item[(3)] If $v$ is a degree 2 vertex of $D$, then $\pi^{-1}(v)\cap L$ is connected;
\item[(4)] If $v$ is a crossing, i.e. a degree 4 vertex of $D$, then $\pi^{-1}(v)\cap L$ has exactly two connected components, and the component whose points have greater $z$-coordinates than those of the points of the other component is connected to the two edges of $L$ whose projections in $D$ are labelled as overstrands at $v$. \vspace{-0.2in}
\end{itemize}
\end{definition}

\medskip

An \textit{Escher square} in a lattice diagram is a 4-cycle $v_1 v_2 v_3 v_4$ such that, up to reversing all four crossings, every edge $v_{i} v_{i+1}$ is an understrand at $v_i$ and an overstrand at $v_{i+1}$  (where $v_{4+1} := v_1$).
Thus, an Escher square looks either as in the center of Figure~\ref{Fig-GraphTrefoil-Escher}(b), or its mirror image, as in Figure~\ref{Fig-Lemma4cycle}(c).
We named this configuration an Escher square as a reference to M.C. Escher's impossible stair case \cite{E}.

Allardice and Bloch \cite{AB} showed that
a lattice diagram is realizable if and only if it does not  contain an Escher square.
Here we give a shorter and simpler proof of their result.

Given a lattice diagram $D$, we construct an associated digraph as follows. 
Each edge of $D$ is a vertex of $\g$. 
If $e$ and $f$ are edges of $D$ which meet at a crossing with $e$ an understrand and $f$ an overstrand, 
then $\g$ contains an edge directed from vertex $e$ to vertex $f$,
which we denote as  $ef$.
Figure \ref{Fig-GraphTrefoil-Escher} shows two lattice diagrams and their associated digraphs.

\begin{figure}[ht]
 \centering
 \includegraphics[width=110mm]{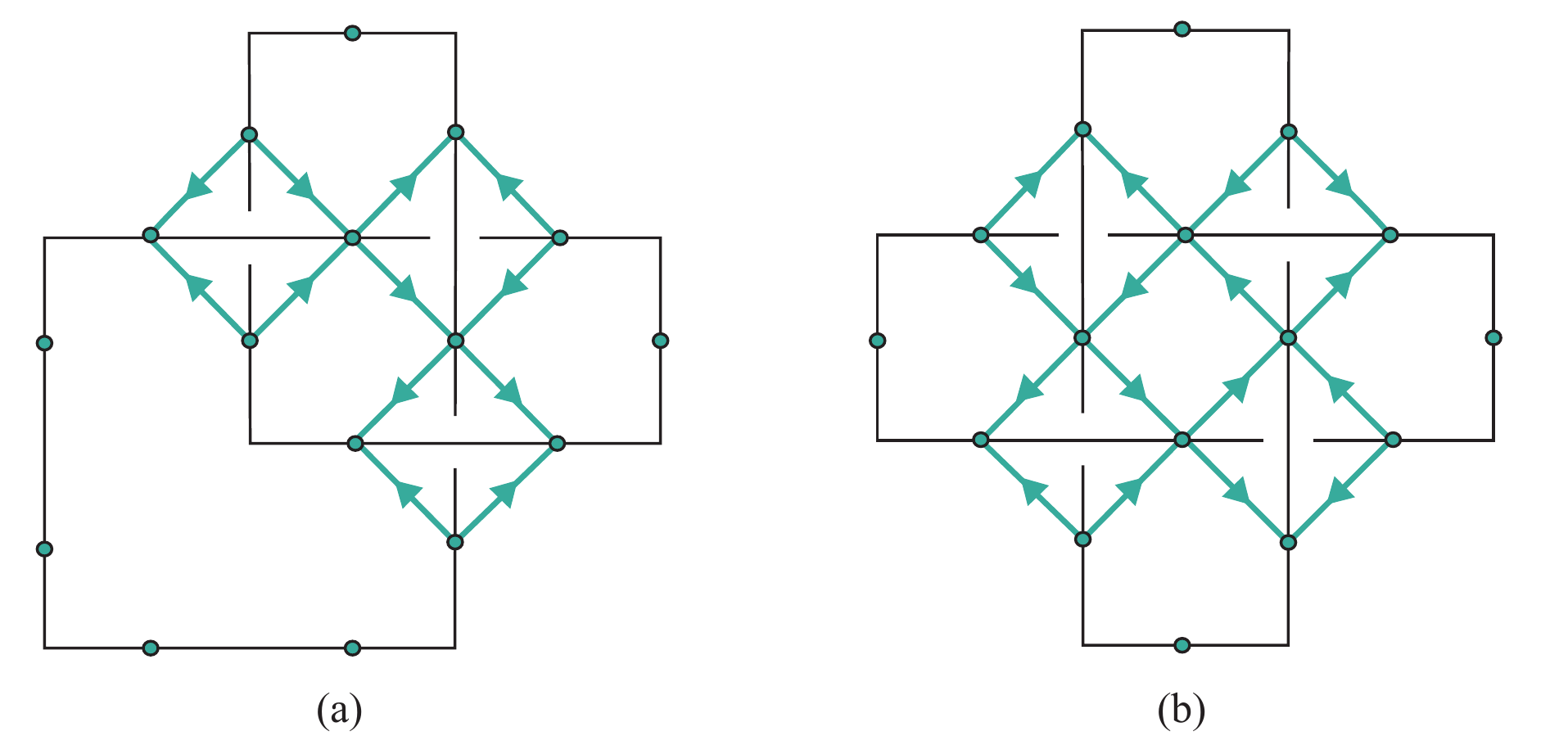}
\caption{Lattice diagrams and associated digraphs.}
\label{Fig-GraphTrefoil-Escher}
\end{figure}

An $n$-cycle in $\g$ is a sequence $e_1e_2...e_n$ of distinct vertices of $\g$ such that $e_ne_1$ and $e_ie_{i+1}$ are edges, for $1\le i \le n-1$.
A \textit{height function} on $\g$ is an integer-value function $h$ on the vertices of $\g$ such that for each edge $ef$, $h(e)<h(f)$.

\section{Main Result}

\begin{lemma} A lattice diagram $D$ is realizable if and only if  its associated digraph $\g$ admits a height function. 
\label{construction}
\end{lemma}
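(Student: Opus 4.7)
The plan is to translate back and forth between the geometry of a lift $L$ of $D$ and the combinatorial height data on $\g$.

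For the forward direction, suppose $D$ is realizable by a lattice link $L$. Every edge of $L$ is a unit segment in $G_3$, so it is either horizontal (parallel to the $x$- or $y$-axis) or vertical (parallel to the $z$-axis). A horizontal edge projects onto a unit edge of $G_2$, while a vertical edge projects onto a lattice point of $\Z^2$. Condition~(2) of Definition~\ref{realizable} therefore forces each edge $e$ of $D$ to be the image of a unique horizontal edge of $L$; I define $h(e)\in\Z$ to be the $z$-coordinate of that edge. If $ef$ is an edge of $\g$, then $e$ is the understrand and $f$ the overstrand at some crossing $v$ of $D$. By condition~(4), the lift of $f$ at $v$ lies in the upper of the two components of $\pi^{-1}(v)\cap L$ and the lift of $e$ at $v$ lies in the lower component, so $h(e)<h(f)$ as required.

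For the reverse direction, suppose $\g$ admits a height function $h$. I would build $L$ piece by piece. For each edge $e$ of $D$ joining lattice points $p,q\in\Z^2$, include in $L$ the horizontal edge from $(p,h(e))$ to $(q,h(e))$. At a degree-$2$ vertex $v$ of $D$ with incident edges $e_1,e_2$, include every vertical unit edge above $v$ whose height interval lies in $[\min\{h(e_1),h(e_2)\},\max\{h(e_1),h(e_2)\}]$, joining the two horizontal lifts by a vertical ``staircase riser.'' At a crossing $v$ with overstrand edges $e_1,e_3$ and understrand edges $e_2,e_4$, do the same thing twice: join the lifts of $e_1$ and $e_3$ by vertical edges filling $[\min\{h(e_1),h(e_3)\},\max\{h(e_1),h(e_3)\}]$, and, independently, join the lifts of $e_2$ and $e_4$ by vertical edges filling $[\min\{h(e_2),h(e_4)\},\max\{h(e_2),h(e_4)\}]$.

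The single inequality that makes this construction work is that at each crossing the four directed edges of $\g$ from understrand to overstrand give $\max\{h(e_2),h(e_4)\}<\min\{h(e_1),h(e_3)\}$, so the two vertical strings at the crossing are disjoint and the upper one meets precisely the two overstrand lifts, which is exactly condition~(4). Conditions~(1)--(3) can be read off from the construction, and a local check at each vertex of $D$ shows that every lattice point appearing in $L$ has exactly two incident unit edges, so $L$ is indeed a disjoint union of lattice polygons. The main thing to be careful about is the degenerate case where two of the relevant heights coincide (e.g.\ $h(e_1)=h(e_3)$ at a crossing, in which case the overstrand passes horizontally through the crossing with no riser at all), but the ``fill all intermediate vertical edges'' recipe handles this uniformly, so I do not expect a substantial obstacle beyond this bookkeeping.
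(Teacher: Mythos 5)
Your proposal is correct and follows essentially the same route as the paper: extract $h(e)$ from the $z$-coordinate of the horizontal lift of each edge for the forward direction, and for the converse lift each edge of $D$ to height $h(e)$ and join lifts at each vertex by vertical segments, with the height-function inequalities at a crossing (your $\max\{h(e_2),h(e_4)\}<\min\{h(e_1),h(e_3)\}$) guaranteeing the over- and understrand components of $\pi^{-1}(v)\cap L$ are disjoint. Your write-up is somewhat more explicit than the paper's (which just says to connect the lifts by paths in $\pi^{-1}(v)$), but there is no substantive difference.
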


\begin{proof}
Let $\g$ be the digraph associated with a lattice diagram $D$. 
Suppose $D$ is realizable, with $\pi(L)=D$ as in Definition \ref{realizable}. 
For each edge $e$ of $D$, let $h(e)$ be the $z$-coordinate of a point in $L$ whose image under $\pi$ is in the interior of $e$. 
Then, by part 4 of Definition \ref{realizable}, for each edge $ef$ of $\g$, $h(e)<h(f)$. Hence $h$ is a height function on $\g$.

To show the converse, suppose there exists a height function $g$ on $\g$. 
For each edge $e$ of $D$, let $\tilde{e}$ be the edge in $G_3$ such that $\pi(\tilde{e})=e$ and such that $\tilde{e}$ lies on the plane $z=g(e)$. 
Let $e, f$ be a pair of edges that meet at a vertex $v$ of $D$. If $\mathrm{deg}(v)=2$, 
then connect $\tilde{e}$ and $\tilde{f}$ by a path
 in $\pi^{-1}(v)$. 

If $\mathrm{deg}(v)=4$, and if $e$ and $f$ are either both overstrands at $v$, or both understrands at $v$, then connect $\tilde{e}$ and $\tilde{f}$ by a path in $\pi^{-1}(v)$. 
 The fact that $g$ is a height function implies that the path connecting the ``lifts'' of the overstrands at $v$ is disjoint from the path connecting the ``lifts'' of the two understrands at $v$. 
 Thus we have constructed an embedded link in $G_3$ which satisfies all the properties in Definition \ref{realizable}, showing that $D$ is realizable.
\end{proof}

\begin{lemma} A digraph $\g$ admits a height function if and only if it has no cycles.
\label{cyclenoheight}
\end{lemma}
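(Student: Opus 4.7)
The forward direction is immediate from the definition: if $h$ is a height function and $e_1 e_2 \cdots e_n$ is a cycle in $\g$, then chasing the inequalities $h(e_1) < h(e_2) < \cdots < h(e_n) < h(e_1)$ yields a contradiction. So the interesting direction is the converse.

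For the converse, I will assume $\g$ has no cycles and construct a height function explicitly. Since the lattice link in question is compact, $D$ is a finite lattice diagram, so $\g$ is a finite digraph. The plan is to use the standard trick for acyclic digraphs: for each vertex $e$ of $\g$, define
\[
h(e) = \max\{\, n : \text{there is a directed path } e_0 e_1 \cdots e_n = e \text{ in } \g \,\}.
\]
Because $\g$ is finite and acyclic, any directed path in $\g$ visits each vertex at most once, so its length is bounded by the number of vertices of $\g$. Hence the maximum defining $h(e)$ is taken over a finite nonempty set (the length-$0$ path consisting of $e$ alone is always available), and $h$ is a well-defined integer-valued function.

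It remains to verify that $h$ strictly increases along edges. Given an edge $ef$ in $\g$, let $e_0 e_1 \cdots e_k = e$ be a directed path realizing $h(e) = k$. Appending $f$ produces a directed path $e_0 e_1 \cdots e_k f$ of length $k+1$ ending at $f$; acyclicity guarantees that $f$ does not already appear among $e_0,\ldots,e_k$, so this really is a path. Therefore $h(f) \geq k+1 > h(e)$, and $h$ is a height function.

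The only genuine subtlety is ensuring path lengths are bounded so that $h$ is integer-valued, and this is handled by the finiteness of $\g$ together with acyclicity; I expect no serious obstacle beyond stating this observation carefully.
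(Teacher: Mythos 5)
Your proof is correct and follows essentially the same route as the paper: the forward direction is the identical inequality-chasing argument, and for the converse both you and the paper define the height of a vertex as the length of a longest directed path ending there (the paper phrases this as the longest path from a source vertex, which amounts to the same function) and verify monotonicity by appending an edge to a maximal path. Your explicit remark that finiteness of $\g$ is needed for the maximum to be well defined is a small point the paper leaves implicit, but it is not a different approach.
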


\begin{proof}
Let $h$ be a height function on a digraph $\g$. 
Then $\g$ cannot have any cycle $v_1,...,v_n$ because we would have $h(v_1)<h(v_2)<....<h(v_n)<h(v_1)$.

Conversely, suppose $\g$ is a digraph that has no cycles. 
Let $S$ be the set of vertices $v\in \g$ such that no edge is oriented towards $v$.
Since $\g$ has no cycles, the set  $S$ is non-empty.
For each $v\in S$, let $g(v)=0$.
For each $v\notin S$, let $g(v)$ be the length of the longest path from a vertex in $S$ to $v.$ 
Notice that all paths are simple since $\g$ has no cycles. 
Then, for each edge $vw$ in $\g$, $g(v)<g(w)$ since if $v_0v_1....v_n$ is a path with $v_0$ in $S$, $v_n=v$, and $n=g(v)$, then $v_0v_1....v_nw$ is a path from $v_0$ to $w$, which implies $g(w)\ge g(v)+1$. So $g$ is a height function on $\g$.
\end{proof}

\begin{lemma} The digraph $\g$ associated with a lattice diagram $D$ has no cycles if and only if it has no 4-cycles. 
\label{nocycleno4}
\end{lemma}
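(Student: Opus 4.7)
The forward direction is immediate: no cycles implies no 4-cycles. For the converse, I plan to take a shortest cycle $C=e_1e_2\cdots e_n$ in $\g$ and show $n=4$. Two preliminary observations will give $n\ge 4$: $\g$ has no 2-cycle, since two distinct lattice edges share at most one endpoint, and no 3-cycle, since consecutive edges of a $\g$-cycle meet perpendicularly at their common crossing and there are no three pairwise perpendicular lines in $\mathbb{R}^2$. The cycle $C$ corresponds to a closed walk $v_1,v_2,\ldots,v_n$ through crossings of $D$, where $v_i$ is the common endpoint of $e_i$ and $e_{i+1}$, and the walk turns $90^\circ$ at each $v_i$.

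\emph{Step 1: the walk is simple.} If $v_i=v_j$ for distinct indices $i,j$, then the four distinct edges $e_i,e_{i+1},e_j,e_{j+1}$ all meet at this degree-$4$ crossing, so $\{e_i,e_j\}$ are its two understrands and $\{e_{i+1},e_{j+1}\}$ its two overstrands. This creates chord arcs $e_i\to e_{j+1}$ and $e_j\to e_{i+1}$ in $\g$, and each chord combined with the appropriate portion of $C$ yields a cycle strictly shorter than $C$, contradicting minimality.

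\emph{Step 2: a convex corner of the simple walk forces a 4-cycle.} Choose $v_i$ with smallest $x$-coordinate, breaking ties by smallest $y$-coordinate. Then $v_{i-1},v_{i+1}\in\{v_i+(1,0),v_i+(0,1)\}$, and by perpendicularity I may assume $v_{i-1}=v_i+(1,0)$ and $v_{i+1}=v_i+(0,1)$. The same extremality applied at $v_{i+1}$ forces $v_{i+2}=v_i+(1,1)$. The vertical lattice edge $f$ from $v_{i-1}$ to $v_{i+2}$ lies in $D$ because both endpoints are degree-$4$ vertices. From the arcs $e_{i-1}\to e_i$ and $e_{i+2}\to e_{i+3}$ of $C$ I read off that the horizontal strands are on top at $v_{i-1}$ and the vertical strands are on top at $v_{i+2}$, so $f$ is an understrand at $v_{i-1}$ and an overstrand at $v_{i+2}$. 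This gives arcs $f\to e_i$ and $e_{i+2}\to f$ in $\g$, hence a 4-cycle $e_i\to e_{i+1}\to e_{i+2}\to f\to e_i$. For $n>4$ this is strictly shorter than $C$ (one checks that $f\notin\{e_i,e_{i+1},e_{i+2}\}$ since $f$ is the fourth side of a unit square), contradicting minimality. Hence $n=4$.

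The main obstacle will be the over/under bookkeeping in Step 2: each arc of $C$ incident to $v_{i-1}$ or $v_{i+2}$ dictates which coordinate direction sits on top at that crossing, and this must be combined with the degree-$4$ condition (which ensures $f\in D$ in the first place) to classify $f$ at both endpoints and close the square into a 4-cycle of $\g$.
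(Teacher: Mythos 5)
Your proof is correct, and its endgame coincides with the paper's: locate three consecutive edges $e_i,e_{i+1},e_{i+2}$ of the cycle forming three sides of a unit square, note that the fourth side $f$ lies in $D$ because both of its endpoints are degree-$4$ crossings, and read off the over/under data at those two crossings to get the arcs $e_{i+2}\to f$ and $f\to e_i$ closing a 4-cycle. Where you differ is in how the square is found. The paper argues that for some $k$ the edges $e_k$ and $e_{k+2}$ must be ``aligned'' (opposite sides of a unit square), because otherwise the cycle would trace a monotone staircase that cannot close up --- an argument made by appeal to a figure. You instead pass to a shortest cycle and pick the lexicographically smallest crossing $v_i$ of the associated walk; extremality immediately pins down $v_{i-1}$, $v_{i+1}$, $v_{i+2}$ and produces the U-turn with no case analysis of turning patterns. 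This is a more self-contained and checkable route to the same configuration, at the cost of the minimal-counterexample scaffolding. Two further remarks: your explicit exclusion of 2- and 3-cycles (via edges sharing at most one endpoint, and via perpendicularity) fills a small gap the paper leaves implicit, since its argument only treats cycles of length $n>4$; and your Step 1 is actually dispensable --- the extremal argument in Step 2 only uses that every $v_j$ is lexicographically at least $v_i$, which holds whether or not the walk through the crossings is simple.
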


\begin{proof}
We say two edges in $G_2$ are \textit{aligned} if they are opposite edges in some unit square in $G_2$.
 See Figure \ref{Fig-Lemma4cycle}(a).
\begin{figure}[htpb!]
\begin{center}
\begin{picture}(420, 168)
\put(0,0){\includegraphics[width=5.7in]{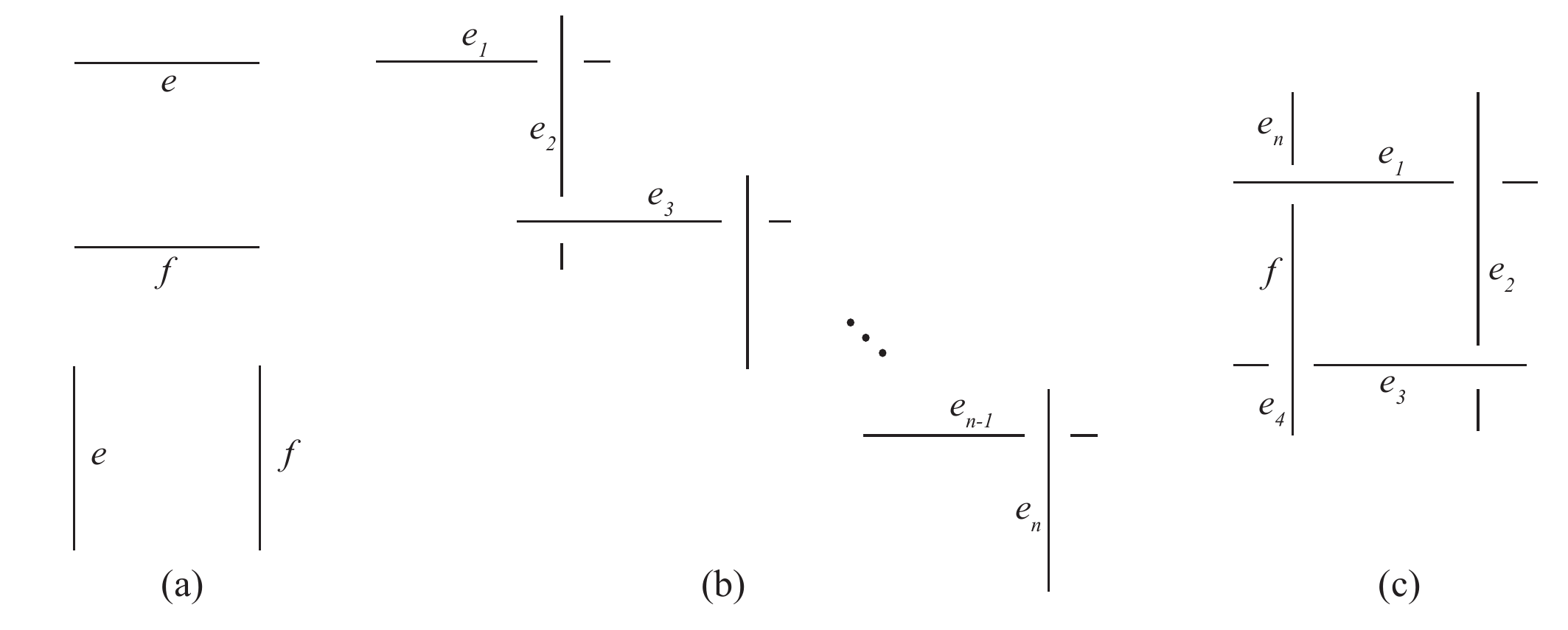}}
\end{picture}
\caption{(a) aligned edges; (b) edge configuration; (c) Escher square.}
\label{Fig-Lemma4cycle}
\end{center}
\end{figure}
Let $\g$ be the digraph associated with a lattice diagram $D$. 
Suppose $e_1e_2 ... e_n$ is a cycle in $\g$, $n>4$. 
Then for some $k$, $1\le k \le n$, $e_k$ and $e_{k+2}$ are aligned (where $e_{n+1}:=e_1$ and $e_{n+2}:=e_2$), since otherwise, up to reflection and rotation, the edges of $D$ corresponding to this cycle would have to look as in Figure~\ref{Fig-Lemma4cycle}(b); 
but this would contradict the assumption that $e_1$ and $e_n$ meet at a crossing. 
So, without loss of generality, suppose that $e_1$ and $e_3$ are aligned. 
Then, up to rotation and reflection, $D$ contains the picture in Figure~\ref{Fig-Lemma4cycle}(c).
In this figure, since $e_n$ meets $e_1$ as an understrand, so must $f$. 
Also, since $e_3$ meets $e_4$ as an understrand, $e_3$ must meet $f$ as an understrand as well. So $e_1e_2e_3f$  is a 4-cycle in $\g$.
\end{proof}

\begin{theorem} 
A lattice diagram is realizable if and only if it does not  contain an Escher square.  
\end{theorem}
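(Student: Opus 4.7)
The plan is to chain the three preceding lemmas. By Lemma~\ref{construction}, Lemma~\ref{cyclenoheight}, and Lemma~\ref{nocycleno4}, the lattice diagram $D$ is realizable if and only if its associated digraph $\g$ contains no $4$-cycles. So it suffices to establish a direct correspondence: $\g$ contains a $4$-cycle if and only if $D$ contains an Escher square.

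For the forward direction, I would start from a $4$-cycle $e_1 e_2 e_3 e_4$ in $\g$. Since each $e_i$ meets both $e_{i-1}$ and $e_{i+1}$ (indices mod $4$) at crossings, and since edges of $G_2$ have unit length, each endpoint of $e_i$ must be a crossing. Setting $v_i$ to be the crossing where $e_i$ meets $e_{i+1}$, the four vertices $v_1,v_2,v_3,v_4$ are distinct (any identification would force two of the $e_j$'s to coincide as unit edges with the same endpoints) and form a $4$-cycle in $G_2$, hence a unit square, with $e_{i+1}$ being the side joining $v_i$ to $v_{i+1}$. The directed-edge data of the $4$-cycle in $\g$ then says that at each $v_i$, the edge $e_i = v_{i-1}v_i$ is an understrand while $e_{i+1} = v_iv_{i+1}$ is an overstrand, which is precisely the pattern of an Escher square with all four crossings reversed.

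The converse runs this argument backwards: given an Escher square on vertices $v_1 v_2 v_3 v_4$, set $f_i := v_iv_{i+1}$. By definition of Escher square, at each $v_i$ the edge $f_i$ is an understrand and $f_{i-1}$ is an overstrand, so $\g$ contains the directed edges $f_i \to f_{i-1}$, which close up into the $4$-cycle $f_1 \to f_4 \to f_3 \to f_2 \to f_1$ in $\g$. The mirror case yields the same cycle with reversed orientation.

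I do not foresee any substantial obstacle; the only care needed is to track indices consistently and to notice that the cycle in $\g$ naturally produces the mirrored version of an Escher square, which is precisely what the clause ``up to reversing all four crossings'' in the definition is there to accommodate.
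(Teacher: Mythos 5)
Your proof is correct and takes essentially the same route as the paper, whose own proof of the theorem just chains Lemmas \ref{construction}, \ref{cyclenoheight} and \ref{nocycleno4}. The only difference is that you explicitly verify the dictionary between $4$-cycles in $\Gamma$ and Escher squares in $D$ (distinctness of the four crossings, the fact that the only $4$-cycles in $G_2$ are unit squares, and the role of the ``up to reversing all four crossings'' clause) --- a step the paper treats as immediate from the definitions; your added detail is accurate.
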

\begin{proof}
This follows immediately from Lemmas \ref{construction}, \ref{cyclenoheight} and \ref{nocycleno4},
\end{proof}


\bibliographystyle{amsplain}

\end{document}